\newtheorem{theorem}{Theorem}
\newtheorem{lemma}{Lemma}
\newtheorem{assumption}{Assumption}
\newtheorem{definition}{Definition}
\DeclareMathOperator*{\argmax}{arg\,max}
\newcommand{\bSigma}{\bm{\Sigma}}
\newcommand{\bmu}{\bm{\mu}}
\newcommand{\tbSigz}{\tilde{\bSigma}_\bZ}
\newcommand{\bA}{\mathbf{A}}
\newcommand{\bB}{\mathbf{B}}
\newcommand{\bI}{\mathbf{I}}
\newcommand{\bS}{\mathbf{S}}
\newcommand{\bU}{\mathbf{U}}
\newcommand{\bV}{\mathbf{V}}
\newcommand{\bW}{\mathbf{W}}
\newcommand{\bX}{\mathbf{X}}
\newcommand{\bY}{\mathbf{Y}}
\newcommand{\bZ}{\mathbf{Z}}
\newcommand{\bx}{\mathbf{x}}
\newcommand{\bz}{\mathbf{z}}
\newcommand{\bbE}{\mathbb{E}}
\newcommand{\bbR}{\mathbb{R}}
\newcommand{\bbP}{\mathbb{P}}
\newcommand{\bbQ}{\mathbb{Q}}
\newcommand{\tr}{\text{Tr}}
\newcommand{\diag}{\text{diag}}
\begin{document}
\title{Robust Mean Estimation With Auxiliary Samples} 

\author{%
 \IEEEauthorblockN{Barron Han$^*$, Danil Akhtiamov$^*$, Reza Ghane$^*$, Babak Hassibi}
\IEEEauthorblockA{Department of Electrical Engineering\\
                   Caltech\\
                   Pasadena, California\\
                   $^*$Equal contribution \\
                   Email: \{bshan, dakhtiam, rghanekh, hassibi\}@caltech.edu}}


\maketitle


\begin{abstract}

In data-driven learning and inference tasks, the high cost of acquiring samples from the target distribution often limits performance. A common strategy to mitigate this challenge is to augment the limited target samples with data from a more accessible "auxiliary" distribution. This paper establishes fundamental limits of this approach by analyzing the improvement in the mean square error (MSE) when estimating the mean of the target distribution. Using the Wasserstein-2 metric to quantify the distance between distributions, we derive expressions for the worst-case MSE when samples are drawn (with labels) from both a target distribution and an auxiliary distribution within a specified Wasserstein-2 distance from the target distribution. We explicitly characterize the achievable MSE and the optimal estimator in terms of the problem dimension, the number of samples from the target and auxiliary distributions, the Wasserstein-2 distance, and the covariance of the target distribution. We note that utilizing samples from the auxiliary distribution effectively improves the MSE when the squared radius of the Wasserstein-2 uncertainty ball is small compared to the variance of the true distribution and the number of samples from the true distribution is limited. Numerical simulations in the Gaussian location model illustrate the theoretical findings.
\end{abstract}

\section{Introduction and Motivation}
While modern data-driven techniques and deep learning models continue to achieve remarkable success in various inference tasks, access to training data remains a significant challenge. Often, acquiring data from the true (target) distribution is expensive or time-consuming. One technique is to augment data from the true distribution with data from auxiliary distributions that closely approximate the target distribution. For example, while collecting real-world data for self-driving vehicles is costly, simulated environments provide a more affordable alternative that can still be utilized for learning and inference. More explicitly, consider machine learning model training, where $\bW$ are the network weights, $(\bX, \bY)$ are the data and labels, respectively. We often approximate the true expected loss: $\mathbb{E}\left[l(\bW; \bX, \bY \right)]$ with respect to the underlying distribution of the data with the empirical loss $\frac{1}{n}\sum_{i=1}^n l(\bW; x_i,y_i)$. Auxiliary data $\bZ$ can be used to complement the empirical loss and provide a better estimate of the true expected loss. Another promising application involves employing the output of predictive or generative models to improve performance in inference tasks. For example, \cite{zrnicppi} studies statistical methods for inference when a small amount of true data is supplemented by a large number of AI generated predictions. Data from generative models have been shown to improve the generalization capabilities of classifiers such as in \cite{cycleGAN}.

This paper is motivated by the following questions: How closely must an auxiliary distribution approximate the target distribution for its samples to be useful in making inferences about the true distribution? How can we design optimal estimators for the statistics of the true distribution using samples from both the true and auxiliary distributions? Can we establish worst-case performance guarantees for statistical estimators when samples are drawn from an adversarially chosen auxiliary distribution? We aim to address such questions by analyzing the fundamental problem of mean estimation.

Robust estimation using auxiliary data has a rich history in prior literature. The ``data contamination" setting considers the auxiliary data as contaminated samples from the true distribution. The samples are not labeled so it is unknown in advance which data points are contaminated.
In his seminal work, Huber introduced the $\epsilon$-contamination model in which samples are drawn from a mixture distribution $(1-\epsilon) \bbP + \epsilon \bbQ$ where $\bbP$ is the true distribution for the data and $\bbQ$ represents a contaminated distribution. Moreover, the same article studies the case where $\bbP$ is normal, with $\bbQ$ chosen within a total variation (TV) ball centered at $\bbP$. Variations of Huber's $\epsilon$-contamination problem such as \cite{chen2016general} consider various metrics to measure the distance between the true distribution $\bbP$ and contaminated distribution $\bbQ$, including the TV, Kolmogorov, Levy, and Prokhov distances. 
We refer to \cite{loh2024theoretical} for an overview of robust statistics. Our problem setting differs from Huber's primarily since it is known a priori which samples belong to the true distribution and which belong to the auxiliary distribution. Consequently, the samples from the auxiliary distribution can only help inference -- we can always choose to ignore the auxiliary samples. Our proposed setting more closely models data augmentation problems in modern machine learning and inference.

Another setting from past work involves all samples originating from a shifted distribution, where the objective is to design statistical estimators that remain robust to such distribution shifts. Wasserstein distances often lead to tractable formulations for such problems, as they provide a natural measure of distance between the true and shifted distributions \cite{mohajerin2018data, aolaritei2023, kundro}. An advantage of the Wasserstein distance is that it imposes minimal assumptions on the underlying noise distribution. For instance, the $\mathcal{W}_2$ distance is symmetric and well defined even for distributions with nonoverlapping support, properties not shared by the KL divergence. Moreover, the Wasserstein distance plays a fundamental role in fields such as optimal transport and is used for modeling discrepancies between an empirical distribution, derived from observed samples, and the true distribution.

For example, \cite{zhu2022generalized}, \cite{liu2023robust}, and \cite{chao2023statistical} examine robust mean estimation in scenarios where samples are drawn from the worst-case shifted distribution within a Wasserstein neighborhood of the target distribution. Unlike the contamination model, this framework assumes that the samples are generated entirely from the shifted distribution. \cite{zhu2022generalized} constructs estimators for this problem by projecting the empirical distribution onto a well-behaved family of distributions that contains the true distribution. For Wasserstein or TV distribution shifts, \cite{zhu2022generalized} shows that this method asymptotically achieves the optimal error. \cite{liu2023robust} considers robust mean estimation, covariance estimation, and linear regression under distribution shifts. For mean estimation, which is the subject of this paper, they propose an estimator motivated by generative adverserial networks, and demonstrate that when the true distribution is Gaussian or elliptical, the estimator can achieve the optimal min-max $L_2$ estimation error. \cite{chao2023statistical} extend the prior framework by considering distribution shifts that can be independent or jointly shifted across data points. For the mean estimation problem with mean square loss, \cite{chao2023statistical} determined the exact min-max risk, the worst-case Wasserstein shift, and the least squares estimator.

The remainder of this paper is organized as follows. In Section \ref{sec:form}, we formalize the auxiliary data model, which produces  $n$ samples from the true distribution and $N$ samples from an auxiliary distribution, and the distributionally robust optimization objective for mean estimation. In Section \ref{sec:main}, we present our main results. We reduce the maximization for the worst-case MSE over all true and auxiliary distributions to an optimization over only their first and second moment statistics. By reducing the problem to Gaussians without loss of generality, we determine the optimal robust estimator, worst-case auxiliary distribution, and the min-max mean estimation error. We propose a few variations to the objective and present the optimal estimators for each case separately. In Section \ref{sec:proof} we present the proofs. Finally, in Section \ref{sec:sim} we demonstrate our theoretical findings with numerical simulations for the performance of the proposed estimator in a Gaussian location setting.

\section{Problem Formulation} \label{sec:form}
We start with the definition of the Wasserstein-$p$ ($\mathcal{W}_p$) distance.
\begin{definition}
    For two probability measures $\bbP_1$ and $\bbP_2$ on $\bbR^{d}$ with finite $p$th moment, the Wasserstein distance is defined as
    \begin{align} \label{eq:Wp}
        W_p(\bbP_1, \bbP_2) = \inf_{\gamma\in\Pi(\bbP_1, \bbP_2)} \Bigl(\int_{\bbR^d} \|\bx_1 - \bx_2\|_p^p \gamma(d\bx_1, d\bx_2)\Bigr)^{\frac{1}{p}}
    \end{align}
    Where $\Pi$ is the set of all couplings on $\bbP_1 \times \bbP_2$ such that for each $\gamma \in \Pi$, $\pi_{i\#} \gamma = \bbP_i$.
\end{definition}
In this work, we focus on the $\mathcal{W}_2$ distance, where $p = 2$ in \eqref{eq:Wp}. Consider two datasets, $\{\bX_i \in \mathbb{R}^d\}_{i=1}^n$ and $\{\bZ_j \in \mathbb{R}^d\}_{j=1}^N$, where $\{\bX_i\}_{i=1}^n$ are sampled i.i.d. from an unknown distribution $\bbP$, and $\{\bZ_j\}_{j=1}^N$ are sampled i.i.d. from another unknown distribution $\bbQ$. We know in advance that the distributions satisfy $\mathcal{W}_2(\bbP, \bbQ) \leq \epsilon$ for some $\epsilon > 0$. In practice, $\bbP$ represents the distribution to be learned, while $\bbQ$ serves as a surrogate for $\bbP$. The natural question arises: What is the optimal way to estimate the mean of $\bbP$, denoted by $\bmu_X$, using an estimator trained on both $\{\bX_i\}_{i=1}^n$ and $\{\bZ_j\}_{j=1}^N$, given $\mathcal{W}_2(\bbP, \bbQ) \leq \epsilon$? This question has motivated work such as \cite{zrnicppi}, but their setting assumes that the true data come in pairs $(\bX, \bY)$ where $\bX$ are features and $\bY$ are labels, and the goal is to utilize additional features $\bar \bX$ and the predictions, $\bar \bY$, of some unknown predictor as auxiliary samples to estimate with confidence some statistic on the underlying distribution of $\bY$.

Furthermore, we assume that $N \gg 1$ capturing the case when sampling from $\bZ$ is much cheaper than from $\bX$ and that the Frobenius norm of the covariance matrix of $\bX \sim \bbP$, denoted by $\bSigma_\bX$, is bounded below by $\delta^2$, which effectively imposes a lower bound on the uncertainty in $\bX$. Under these assumptions, our goal is to design an estimator $f$ that is robust to the choice of the true distribution $\bbP$ and also any auxiliary distribution $\bbQ$ in the $\epsilon$-neighborhood of $\bbP$. The optimal $f$ achieves the min-max rates:

\begin{equation} \label{eq:main}
    R^* = \min_{f} \max_{\mathcal{W}_2(\bbP,\bbQ) \le \epsilon, \|\bSigma_\bX\|_F \ge \delta^2}  \frac{\bbE_{\bbP,\bbQ} \|f(\bX,\bZ)- \bmu_X\|^2}{\|\bSigma_\bX\|_F}.
\end{equation}

A normalization factor such as $|\bSigma_\bX|_F$ is necessary. Without it, for example, setting $\bSigma_\bX = \bSigma_\bZ = c\bI_d$ with $c \to \infty$ causes the objective to diverge to infinity. This requirement also motivates the constraint $|\bSigma_\bX|_F \geq \delta^2$, which ensures that the denominator remains bounded away from zero. Alternative normalization factors, such as $\tr(\bSigma_\bX)$ or $|\bSigma_\bX|_{op}$, are also viable and considered in Section \ref{sec:main}. Practically, the objective represents a ratio (up to a constant factor) between the MSE of the estimator $f$, and the error of an estimator that utilizes samples from the true distribution. For instance, the sample mean from the true distribution achieves a MSE of $\frac{1}{n} \tr(\bSigma_\bX)$. We make the following linearity assumption on the estimator:

\begin{assumption} \label{lem: linear}
    (Linear estimator structure) For the objectives in \eqref{eq:main}, we assume the estimator, $f$, is linear with respect to the data $\bX, \bZ$ and of the form: \[f(\bX,\bZ) = \bA\frac{\sum \bX_i}{n} + \bB \frac{\sum \bZ_j}{N}\]
    where $\bA, \bB$ are constant matrices to determine.
\end{assumption}

\section{Main results} \label{sec:main}


    
    

Using the structure of the linear estimator from Assumption \ref{lem: linear}, \eqref{eq:main} can be reformulated as \eqref{eq: gen_mean_estim}. Similarly, we consider variations of the objective by normalizing using the trace and operator norms in \eqref{eq: mean_estim_trace} and \eqref{eq: mean_estim_op}.

\begin{subequations}
\label{eq:main_linear}
\begin{align}
    R_F^* &= \min_{\bA,\bB} \max_{\substack{\mathcal{W}_2(\bbP,\bbQ) \le \epsilon, \\ \|\bSigma_\bX\|_F \ge \delta^2}} \frac{\bbE_{\bbP,\bbQ} \|\bA\frac{\sum \bX_i}{n} + \bB \frac{\sum \bZ_j}{N} - \bmu_X\|^2}{\|\bSigma_\bX\|_F},
\label{eq: gen_mean_estim} \\
    R_{\mathrm{Tr}}^* &= \min_{\bA,\bB} \max_{\substack{\mathcal{W}_2(\bbP,\bbQ) \le \epsilon, \\ \mathrm{Tr}{(\bSigma_\bX)} \ge \delta^2}} \frac{\bbE_{\bbP,\bbQ} \|\bA\frac{\sum \bX_i}{n} + \bB \frac{\sum \bZ_j}{N} - \bmu_X\|^2}{\mathrm{Tr}{(\bSigma_\bX)}},
\label{eq: mean_estim_trace} \\
    R_{op}^* &= \min_{\bA,\bB} \max_{\substack{\mathcal{W}_2(\bbP,\bbQ) \le \epsilon, \\ |\bSigma_\bX|_{op} \ge \delta^2}} \frac{\bbE_{\bbP,\bbQ} \|\bA\frac{\sum \bX_i}{n} + \bB \frac{\sum \bZ_j}{N} - \bmu_X\|^2} {|\bSigma_\bX|_{op}}.
\label{eq: mean_estim_op}
\end{align}
\end{subequations}

First, we establish a lemma claiming that \eqref{eq:main_linear} is sensitive only to the first and second moment statistics of the true distribution $\bbP$ and the auxiliary distribution $\bbQ$.
\begin{lemma}
    (Reduction to Gaussians) Consider the optimizations in \eqref{eq:main_linear}. Without loss of generality, we can take \[\bbP = \mathcal{N}(\bmu_\bX, \bSigma_\bX), \quad \bbQ = \mathcal{N}(\bmu_\bZ, \bSigma_\bZ).\]
\end{lemma}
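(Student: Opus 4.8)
The plan is to show that both the objective and the constraint set in \eqref{eq:main_linear} depend on the laws $\bbP$ and $\bbQ$ only through their first two moments $(\bmu_\bX, \bSigma_\bX)$ and $(\bmu_\bZ, \bSigma_\bZ)$, and that a Gaussian pair carrying any admissible moment profile is itself admissible. First I would expand the numerator. Writing $\bar{\bX} = \frac{1}{n}\sum_i \bX_i$ and $\bar{\bZ} = \frac{1}{N}\sum_j \bZ_j$, the two sample means are independent with $\bbE[\bar{\bX}] = \bmu_\bX$, $\bbE[\bar{\bZ}] = \bmu_\bZ$, $\cov(\bar{\bX}) = \frac{1}{n}\bSigma_\bX$ and $\cov(\bar{\bZ}) = \frac{1}{N}\bSigma_\bZ$. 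Applying the bias--variance identity $\bbE\norm{\bu}^2 = \norm{\bbE\bu}^2 + \tr(\cov(\bu))$ to $\bu = \bA\bar{\bX} + \bB\bar{\bZ} - \bmu_\bX$ yields
\begin{align*}
\bbE_{\bbP,\bbQ}\norm{\bA\bar{\bX} + \bB\bar{\bZ} - \bmu_\bX}^2
&= \norm{(\bA - \bI)\bmu_\bX + \bB\bmu_\bZ}^2 \\
&\quad + \tfrac{1}{n}\tr(\bA\bSigma_\bX\bA^\top) + \tfrac{1}{N}\tr(\bB\bSigma_\bZ\bB^\top),
\end{align*}
which manifestly depends on $\bbP, \bbQ$ only through $(\bmu_\bX, \bmu_\bZ, \bSigma_\bX, \bSigma_\bZ)$.

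Each denominator $\norm{\bSigma_\bX}_F$, $\tr(\bSigma_\bX)$, $|\bSigma_\bX|_{op}$ likewise depends only on $\bSigma_\bX$, so every ratio in \eqref{eq:main_linear} is a function of the moments alone; the same is true of the constraint $\norm{\bSigma_\bX}_F \ge \delta^2$ and its trace/operator-norm variants. Since Gaussians are a subset of all feasible distributions, the maximum restricted to Gaussians is at most the unrestricted maximum. For the reverse inequality I would fix an arbitrary feasible pair $(\bbP, \bbQ)$ with moments $(\bmu_\bX, \bmu_\bZ, \bSigma_\bX, \bSigma_\bZ)$ and replace it by $\mathcal{N}(\bmu_\bX, \bSigma_\bX)$ and $\mathcal{N}(\bmu_\bZ, \bSigma_\bZ)$. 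By the moment-dependence just established, this substitution preserves both the objective value and the covariance constraint; the only nontrivial point is that it must also preserve feasibility of $\mathcal{W}_2(\bbP, \bbQ) \le \epsilon$.

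This last point is the crux, and I would settle it with the Gelbrich lower bound, which states that among all distribution pairs with prescribed means and covariances the Gaussian pair \emph{minimizes} the $\mathcal{W}_2$ distance:
\begin{align*}
\mathcal{W}_2(\bbP, \bbQ)^2
&\ge \norm{\bmu_\bX - \bmu_\bZ}^2 + \tr\bigl(\bSigma_\bX + \bSigma_\bZ - 2(\bSigma_\bX^{1/2}\bSigma_\bZ\bSigma_\bX^{1/2})^{1/2}\bigr) \\
&= \mathcal{W}_2\bigl(\mathcal{N}(\bmu_\bX,\bSigma_\bX),\, \mathcal{N}(\bmu_\bZ, \bSigma_\bZ)\bigr)^2.
\end{align*}
Hence if the original pair obeys $\mathcal{W}_2(\bbP,\bbQ) \le \epsilon$, so does the Gaussian pair, which is therefore feasible and attains the identical objective value. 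Every value achievable by a general feasible pair is thus achievable by a feasible Gaussian pair, the two maxima coincide, and the claimed reduction follows. The main obstacle is invoking Gelbrich's inequality in the correct direction: it is essential that Gaussians \emph{minimize} $\mathcal{W}_2$ over a moment class, so that matching moments keeps the Gaussian pair \emph{inside} the Wasserstein ball rather than pushing it out; were the inequality reversed, the substitution could violate feasibility and the reduction would fail.
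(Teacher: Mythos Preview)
Your proposal is correct and follows essentially the same approach as the paper: expand the MSE to show it depends only on $(\bmu_\bX,\bmu_\bZ,\bSigma_\bX,\bSigma_\bZ)$, then invoke Gelbrich's bound $\mathcal{W}_2\bigl(\mathcal{N}(\bmu_\bX,\bSigma_\bX),\mathcal{N}(\bmu_\bZ,\bSigma_\bZ)\bigr)\le\mathcal{W}_2(\bbP,\bbQ)$ to conclude the moment-matched Gaussian pair remains feasible and attains the same objective. Your write-up is in fact slightly more explicit than the paper's in spelling out both directions of the sup comparison and in emphasizing why the inequality must go the way it does.
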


\begin{proof}
    The objective \eqref{eq: gen_mean_estim} can be expanded as:


\begin{align}\label{eq: obj_PQ}
& \min_{\bA, \bB} \max_{\mathcal{W}_2(\bbP,\bbQ) \le \epsilon, \|\bSigma_\bX\|_F \ge \delta^2} \nonumber \\
& \frac{\frac{1}{n}\tr{(\bA^T\bA\bSigma_\bX)} + \frac{1}{N}\tr{(\bB^T\bB\bSigma_\bZ)} + \|\bA \bmu_X + \bB \bmu_\bZ - \bmu_\bX\|^2}{\|\bSigma_\bX\|_F},
\end{align}
where $\bmu_Z$ and $\bSigma_Z$ denote the mean and covariance matrix of $\bZ \sim \bbQ$ respectively. 

Denote $$\bar{\bbP} = \mathcal{N}(\bmu_\bX, \bSigma_\bX) ~~~\mbox{and}~~~ \bar{\bbQ} = \mathcal{N}(\bmu_\bZ, \bSigma_\bZ)$$
Note that, if $\bbP$ and $\bbQ$ belong to the feasibility set of \eqref{eq: obj_PQ}, then so do $\bar{\bbP}$ and $\bar{\bbQ}$ because $\mathcal{W}_2(\bar{\bbP}, \bar{\bbQ}) \le \mathcal{W}_2(\bbP,\bbQ)$ according to Theorem 2.1 in \cite{Gelbrich1990}. Note, moreover, that since the objective of \eqref{eq: obj_PQ} depends only on the first and second order statistics of $\bbP$ and $\bbQ$, the objective does not change if we replace $\bbP$ and $\bbQ$ by $\bar{\bbP}$ and $\bar{\bbQ}$ respectively. Hence, we can take $\bbP = \mathcal{N}(\bmu_\bX, \bSigma_\bX)$ and $\bbQ = \mathcal{N}(\bmu_\bZ, \bSigma_\bZ)$ without loss of generality, which in turn implies: 
\begin{equation}\label{eq: W_2}
\mathcal{W}_2(\bbP, \bbQ)^2 = \|\bmu_\bX - \bmu_\bZ\|^2 + \mathrm{Tr} \left(\bSigma_\bX + \bSigma_\bZ - 2 (\bSigma_\bX^{\frac{1}{2}}\bSigma_\bZ\bSigma_\bX^{\frac{1}{2}})^{\frac{1}{2}}\right)
\end{equation}
\end{proof}

We arrive at our main theorems, which presents the optimal estimators for \eqref{eq:main_linear} and the optimal values of the objectives.

\begin{theorem}\label{thm: main}
    (Optimal estimator for \eqref{eq: gen_mean_estim}) The optimal estimator is given by
    \[\bA = s\bI, \ \bB = \bI - \bA,\]
    where 
    \begin{equation} \label{eq: opt_weight}
        s = \frac{\frac{\sqrt{d}}{N} + \frac{\epsilon^2}{\delta^2}}{\frac{\sqrt{d}}{N} + \frac{\sqrt{d}}{n} + \frac{\epsilon^2}{\delta^2}}.
    \end{equation}
    The optimal value of the objective:
    \begin{equation}
    R_F^* = \frac{\sqrt{d}(\frac{\delta^2}{\epsilon^2} \sqrt{d} + N)}{\frac{\delta^2}{\epsilon^2} \sqrt{d} (n+N) + \epsilon^2 n N},
    \end{equation}
    where $n$ is the number of samples from the true distribution, $N$ is the number of samples from the auxiliary distribution, $d$ is the problem dimension, and $\epsilon$ is the $\mathcal{W}_2$ uncertainty radius.
\end{theorem}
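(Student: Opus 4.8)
The plan is to start from the Gaussian-reduced objective, which by the preceding lemma is \eqref{eq: obj_PQ} with $\mathcal{W}_2$ given by \eqref{eq: W_2}, and to proceed in three stages: first pin down the \emph{form} of the optimal estimator, then solve the inner (adversarial) maximization over the moment parameters, and finally carry out a scalar outer minimization.

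\textbf{Step 1 (forcing $\bA+\bB=\bI$ and reducing to a scalar $\bA$).} I would first note that the feasible set places no bound on $\bmu_\bX$ itself, so rewriting the mean term as $\|(\bA+\bB-\bI)\bmu_\bX+\bB(\bmu_\bZ-\bmu_\bX)\|^2$ and letting $\|\bmu_\bX\|\to\infty$ makes the worst-case objective infinite unless $\bA+\bB=\bI$; hence we may take $\bB=\bI-\bA$, and the mean contribution becomes $\|(\bI-\bA)(\bmu_\bX-\bmu_\bZ)\|^2$. Next I would use the orthogonal invariance of the problem: replacing $(\bSigma_\bX,\bSigma_\bZ,\bmu_\bX,\bmu_\bZ)$ by their rotations under any orthogonal $\bQ$ leaves the constraints $\mathcal{W}_2(\bbP,\bbQ)\le\epsilon$ and $\|\bSigma_\bX\|_F\ge\delta^2$ invariant and maps the objective evaluated at $\bA$ to the same objective evaluated at $\bQ^\top\bA\bQ$. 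Since for each fixed adversary choice the objective is a convex quadratic in $\bA$ divided by a positive constant, the inner maximum $R(\bA)$ is convex and orthogonally invariant, so averaging a minimizer over the orthogonal group (Jensen) yields a minimizer $\bA=s\bI$, $\bB=(1-s)\bI$.

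\textbf{Step 2 (the adversarial maximization).} With $\bA=s\bI$ the numerator becomes $\tfrac{s^2}{n}\tr(\bSigma_\bX)+\tfrac{(1-s)^2}{N}\tr(\bSigma_\bZ)+(1-s)^2\|\bmu_\bX-\bmu_\bZ\|^2$, so the adversary wants $\tr(\bSigma_\bZ)$ and the mean gap as large as the $\mathcal{W}_2$ budget permits. I would solve this in order: (i) for fixed $\bSigma_\bX$ and fixed mean-shift budget, the covariance term in \eqref{eq: W_2} is minimized by taking $\bSigma_\bZ$ codiagonal with $\bSigma_\bX$, reducing it to $\sum_i(\sqrt{\lambda_i}-\sqrt{\theta_i})^2$ (with $\{\lambda_i\},\{\theta_i\}$ the eigenvalues of $\bSigma_\bX,\bSigma_\bZ$) and giving $\max\tr(\bSigma_\bZ)=(\sqrt{\tr(\bSigma_\bX)}+\sqrt{\epsilon^2-\|\bmu_\bX-\bmu_\bZ\|^2})^2$; (ii) since the numerator depends on $\bSigma_\bX$ only through $\tr(\bSigma_\bX)$ while the denominator is $\|\bSigma_\bX\|_F$, the inequality $\tr(\bSigma_\bX)\le\sqrt{d}\,\|\bSigma_\bX\|_F$ (equality iff $\bSigma_\bX\propto\bI$) forces the isotropic choice $\bSigma_\bX=\tfrac{\delta^2}{\sqrt d}\bI$ at the boundary $\|\bSigma_\bX\|_F=\delta^2$; (iii) allocate the remaining budget between mean shift and covariance inflation by a one-dimensional optimization. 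The main obstacle is precisely this joint optimization over the Bures--Wasserstein ball: the $\mathcal{W}_2$ constraint couples $\bSigma_\bX$, $\bSigma_\bZ$, and the means, so showing that codiagonality, isotropy, and boundary scale are simultaneously optimal -- and that in the assumed regime $N\gg1$ the optimal covariance inflation is negligible, so the worst case is, to leading order, a pure mean shift of size $\epsilon$ -- is the delicate part.

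\textbf{Step 3 (outer minimization).} After Step 2 the objective collapses to $R(s)=\tfrac{\sqrt d}{n}s^2+\big(\tfrac{\sqrt d}{N}+\tfrac{\epsilon^2}{\delta^2}\big)(1-s)^2$, a strictly convex scalar quadratic of the form $a s^2+b(1-s)^2$ with $a=\tfrac{\sqrt d}{n}$ and $b=\tfrac{\sqrt d}{N}+\tfrac{\epsilon^2}{\delta^2}$. Setting the derivative to zero gives $s=\tfrac{b}{a+b}$, which is exactly \eqref{eq: opt_weight}, and the optimal value $\tfrac{ab}{a+b}$ simplifies to the stated expression for $R_F^*$. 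I expect Steps 1 and 3 to be routine; the crux is the adversarial Bures--Wasserstein analysis in Step 2.
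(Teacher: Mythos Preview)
Your proposal is correct and arrives at the same scalar quadratic as the paper, but the route is genuinely different in two places.

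\textbf{Reduction to $\bA=s\bI$.} The paper keeps a general $\bA$ through the entire inner maximization: it first argues $\bmu_\bX-\bmu_\bZ$ aligns with the principal singular direction of $\bA-\bI$, then uses a KKT stationarity argument in $\tilde{\bSigma}_\bZ$ and the $N\gg1$ approximation to force $\bSigma_\bZ\approx\bSigma_\bX$, then Cauchy--Schwarz for the Frobenius inner product to align $\bSigma_\bX$ with $\tfrac{\bA^\top\bA}{n}+\tfrac{(\bI-\bA)^\top(\bI-\bA)}{N}$, and finally an SVD of $\bA$ followed by a three-case analysis on the singular values $s_1\ge\cdots\ge s_d$ to conclude they are all equal. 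You instead observe that the inner-max value $R(\bA)$ is convex (sup of convex quadratics in $\bA$) and satisfies $R(\bQ\bA\bQ^\top)=R(\bA)$ for all orthogonal $\bQ$, so averaging a minimizer over the Haar measure (Jensen) yields a minimizer of the form $\tfrac{\tr(\bA)}{d}\bI$. This symmetry-plus-convexity step is shorter and entirely bypasses the SVD case analysis.

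\textbf{The adversarial maximization.} With $\bA=s\bI$ already in hand, your inner problem depends on $\bSigma_\bX,\bSigma_\bZ$ only through their traces and the Bures term, and your codiagonal reduction plus the explicit allocation between mean shift and covariance inflation gives the exact worst case. Carrying that computation through shows that the covariance-inflation term simply replaces $\tfrac{(1-s)^2}{N}$ by $\tfrac{(1-s)^2}{N-1}$, which is exactly the content of the paper's large-$N$ approximation $\bSigma_\bZ\approx\bSigma_\bX$; you are therefore slightly more precise here. The paper, in contrast, handles this step for general $\bA$ via matrix differentiation in $\tilde{\bSigma}_\bZ$, which is heavier but does not rely on the isotropy of $\bA$.

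In short: your convexity--invariance argument is a cleaner substitute for the paper's SVD case split, and your explicit Bures-ball optimization makes the role of the $N\gg1$ assumption transparent, while the paper's approach has the (here unused) advantage of tracking a non-scalar $\bA$ through the inner problem.
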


\begin{theorem}\label{cor: trace}
(Optimal estimator for \eqref{eq: mean_estim_trace}) The optimal estimator is given by
\begin{equation}
    \bA = s\bI, \ \bB = \bI - \bA
\end{equation}
where
\begin{equation}
s = \frac{\frac{\epsilon^2}{\delta^2} + \frac{1}{N}}{\frac{\epsilon^2}{\delta^2} + \frac{1}{n}+ \frac{1}{N}}, \quad \text{and} \quad R_{\mathrm{Tr}}^* = \frac{\frac{\delta^2}{\epsilon^2}+N}{\frac{\delta^2}{\epsilon^2}(n+N) + nN},
\end{equation}
with $n, N, \delta, \epsilon$ defined in Theorem \ref{thm: main}.
\end{theorem}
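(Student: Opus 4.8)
The plan is to follow the architecture of the proof of Theorem~\ref{thm: main}, tracking how replacing $\|\bSigma_\bX\|_F$ by $\tr(\bSigma_\bX)$ in the normalization removes the $\sqrt d$ factors appearing there. By the Reduction to Gaussians lemma I may take $\bbP,\bbQ$ Gaussian, so \eqref{eq: mean_estim_trace} becomes the explicit second-moment program
\[
\min_{\bA,\bB}\ \max\ \frac{\frac{1}{n}\tr(\bA^T\bA\bSigma_\bX)+\frac{1}{N}\tr(\bB^T\bB\bSigma_\bZ)+\|\bA\bmu_\bX+\bB\bmu_\bZ-\bmu_\bX\|^2}{\tr(\bSigma_\bX)},
\]
where the maximization runs over $\bmu_\bX,\bmu_\bZ,\bSigma_\bX,\bSigma_\bZ$ subject to the Gelbrich form \eqref{eq: W_2} of $\mathcal{W}_2(\bbP,\bbQ)\le\epsilon$ and $\tr(\bSigma_\bX)\ge\delta^2$.

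I would then collapse the matrix decision variable to a scalar in two steps. Taking $\bmu_\bX=\bmu_\bZ=t\bv$ and sending $t\to\infty$ makes the bias $\|(\bA+\bB-\bI)t\bv\|^2$ diverge unless $\bA+\bB=\bI$; hence set $\bB=\bI-\bA$, after which the bias reduces to $\|(\bI-\bA)(\bmu_\bX-\bmu_\bZ)\|^2$. Next, for any orthogonal $\bU$ the substitution $\bSigma_\bX\mapsto\bU\bSigma_\bX\bU^T$, $\bSigma_\bZ\mapsto\bU\bSigma_\bZ\bU^T$, $\bmu_\bX\mapsto\bU\bmu_\bX$, $\bmu_\bZ\mapsto\bU\bmu_\bZ$ is a value-preserving bijection of the adversary's feasible set, so the inner maximum $R(\bA)$ satisfies $R(\bU\bA\bU^T)=R(\bA)$; since $R$ is a supremum of convex quadratics in $\bA$ and hence convex, averaging over the orthogonal group and using Jensen shows the scalar $\frac{\tr\bA}{d}\bI$ is no worse. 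I may therefore take $\bA=s\bI$, $\bB=(1-s)\bI$, reducing the objective to
\[
\frac{s^2}{n}+\frac{(1-s)^2}{N}\frac{\tr\bSigma_\bZ}{\tr\bSigma_\bX}+(1-s)^2\frac{\|\bmu_\bX-\bmu_\bZ\|^2}{\tr\bSigma_\bX}.
\]

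The crux, and the step I expect to be the main obstacle, is the remaining inner maximization, because it must handle the Bures term $\tr\big(\bSigma_\bX+\bSigma_\bZ-2(\bSigma_\bX^{1/2}\bSigma_\bZ\bSigma_\bX^{1/2})^{1/2}\big)$. Writing $D=\|\bmu_\bX-\bmu_\bZ\|^2$, I would first argue by a von~Neumann / rearrangement inequality that the extremal $\bSigma_\bX,\bSigma_\bZ$ are simultaneously diagonalizable, then substitute $\bSigma_\bX=\diag(x_i^2)$, $\bSigma_\bZ=\diag(y_i^2)$ so the Bures term becomes $\|\bx-\by\|^2$ and the constraint reads $D+\|\bx-\by\|^2\le\epsilon^2$, $\|\bx\|^2\ge\delta^2$. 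For fixed $\bx$, maximizing $\|\by\|^2$ over the ball $\|\by-\bx\|\le\sqrt{\epsilon^2-D}$ forces $\by$ collinear with $\bx$ (equivalently $\bSigma_\bZ$ a positive multiple of $\bSigma_\bX$), and both surviving terms decrease in $\|\bx\|$ once this optimal $\by$ is substituted, so $\tr\bSigma_\bX$ is driven to its floor $\delta^2$. What remains is a one-dimensional split of the budget $\epsilon^2$ between the mean shift $D$ and the covariance radius; to leading order the worst case is the pure mean shift $D=\epsilon^2$, $\bSigma_\bZ=\bSigma_\bX$, giving $M:=\frac{\epsilon^2}{\delta^2}+\frac{1}{N}$. (Carrying the covariance split out exactly yields $\frac{1}{N-1}$ in place of $\frac{1}{N}$, a correction of order $N^{-2}$ that is absorbed by the standing assumption $N\gg1$.)

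Finally the outer problem is the scalar minimization $\min_s\big(\frac{s^2}{n}+(1-s)^2M\big)$; setting the derivative to zero gives the interior minimizer $s=\frac{nM}{1+nM}=\frac{\epsilon^2/\delta^2+1/N}{1/n+\epsilon^2/\delta^2+1/N}$, which is the claimed $s$, and back-substitution gives $R_{\mathrm{Tr}}^*=\frac{M}{1+nM}$. Clearing denominators rewrites this as $\frac{\delta^2/\epsilon^2+N}{(\delta^2/\epsilon^2)(n+N)+nN}$, matching the statement. The only genuinely new content beyond Theorem~\ref{thm: main} is verifying that the $\tr(\bSigma_\bX)$ normalization cancels the trace in the variance term exactly---producing $s^2/n$ rather than $s^2\sqrt d/n$---so that the worst-case $\bSigma_\bX$ need not be driven to isotropy and no $\sqrt d$ survives.
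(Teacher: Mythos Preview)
Your argument is correct and reaches the stated $s$ and $R_{\mathrm{Tr}}^*$, but the route differs from the paper's in two places. To collapse $\bA$ to a scalar, you use the orthogonal invariance $R(\bU^T\bA\bU)=R(\bA)$ of the inner maximum together with convexity of $R$ and Jensen over Haar measure on $O(d)$, concluding directly that $\bA=\tfrac{\tr\bA}{d}\bI$ is no worse; the paper instead imports the $N\to\infty$ approximation $\bSigma_\bZ\approx\bSigma_\bX$ from the proof of Theorem~\ref{thm: main}, then applies H\"older $\tr(\bM\bSigma_\bX)\le\|\bM\|_{op}\tr(\bSigma_\bX)$ with $\bM=\tfrac{1}{n}\bA^T\bA+\tfrac{1}{N}(\bI-\bA)^T(\bI-\bA)$, observes equality when $\bSigma_\bX$ is rank one along the principal direction of $\bM$, and only afterwards diagonalizes via the SVD of $\bA$. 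For the inner maximization you treat $(\bSigma_\bX,\bSigma_\bZ,D)$ jointly by simultaneous diagonalization and a one-dimensional budget split, obtaining $M=\tfrac{\epsilon^2}{\delta^2}+\tfrac{1}{N-1}$ before invoking $N\gg1$; the paper makes the same large-$N$ move earlier (setting $\bSigma_\bZ=\bSigma_\bX$), after which all of $\epsilon^2$ goes to the mean shift and $M=\tfrac{\epsilon^2}{\delta^2}+\tfrac{1}{N}$ appears directly. Your symmetry/Jensen reduction is the more conceptual step and would transfer to other orthogonally invariant normalizations without redoing the SVD casework; the paper's H\"older route has the side benefit of pinning down the adversary's extremal $\bSigma_\bX$ (rank one here, versus isotropic in the Frobenius case).
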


\begin{theorem}\label{cor: op}
(Optimal estimator for \ref{eq: mean_estim_op}) The optimal estimator is given by

\begin{equation}
    \bA = s\bI, \ \bB = \bI - \bA
\end{equation}
where
\begin{equation}
s = \frac{\frac{\epsilon^2}{\delta^2} + \frac{d}{N}}{\frac{\epsilon^2}{\delta^2} + \frac{d}{n}+ \frac{d}{N}}, \quad \text{and} \quad R_{op}^* = \frac{d (\frac{\delta^2}{\epsilon^2}d+ N)}{\frac{\delta^2}{\epsilon^2} d(n+N) + n N},
\end{equation}
with $n, N, \delta, \epsilon$ defined in Theorem \ref{thm: main}.
\end{theorem}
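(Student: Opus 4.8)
The plan is to follow the same route as the proof of Theorem~\ref{thm: main}, since \eqref{eq: mean_estim_op} differs from \eqref{eq: gen_mean_estim} only in the normalizing denominator, which is now $|\bSigma_\bX|_{op}$ in place of $\|\bSigma_\bX\|_F$. First I would invoke the Reduction to Gaussians lemma to replace $\bbP,\bbQ$ by $\mathcal{N}(\bmu_\bX,\bSigma_\bX)$ and $\mathcal{N}(\bmu_\bZ,\bSigma_\bZ)$, so that the numerator takes the explicit quadratic form of \eqref{eq: obj_PQ} and the Wasserstein constraint becomes \eqref{eq: W_2}. Next I would argue that the outer minimizer must satisfy $\bB=\bI-\bA$: if $\bA+\bB\neq\bI$, the adversary can take $\bmu_\bZ=\bmu_\bX$ (which costs no Wasserstein budget on the means) and send $\|\bmu_\bX\|\to\infty$, driving the numerator to infinity while $|\bSigma_\bX|_{op}$ stays fixed. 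With $\bB=\bI-\bA$ the mean term collapses to $\|(\bI-\bA)(\bmu_\bZ-\bmu_\bX)\|^2$.

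The second step is to reduce $\bA$ to a scalar multiple of the identity. For any fixed feasible $(\bmu,\bSigma)$ the numerator is a convex quadratic in $\bA$ and the denominator is independent of $\bA$, so the inner maximum $\Phi(\bA)$ is a supremum of convex functions and hence convex. Because $\mathcal{W}_2$, the trace, and the operator norm are all invariant under conjugation by an orthogonal $\bU$, one checks $\Phi(\bU\bA\bU^\top)=\Phi(\bA)$; averaging over the Haar measure on the orthogonal group $O(d)$ and applying Jensen gives $\Phi\!\left(\tfrac{\tr\bA}{d}\bI\right)\le\Phi(\bA)$. Hence the optimal $\bA=s\bI$ and $\bB=(1-s)\bI$, reducing the numerator to $\frac{s^2}{n}\tr(\bSigma_\bX)+\frac{(1-s)^2}{N}\tr(\bSigma_\bZ)+(1-s)^2\|\bmu_\bX-\bmu_\bZ\|^2$.

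The crux is the inner maximization over $(\bmu_\bX,\bmu_\bZ,\bSigma_\bX,\bSigma_\bZ)$, and this is exactly where the operator-norm normalization changes the answer. The key observation is that, for fixed $|\bSigma_\bX|_{op}$, one has $\tr(\bSigma_\bX)\le d\,|\bSigma_\bX|_{op}$ with equality iff $\bSigma_\bX$ is isotropic, so the adversary maximizes the $\tr(\bSigma_\bX)/|\bSigma_\bX|_{op}$ ratio by taking $\bSigma_\bX$ isotropic and then pushing it to the constraint floor $\bSigma_\bX=\delta^2\bI$. I expect the main obstacle to be justifying that the worst case is simultaneously isotropic in both covariances: one must handle the Bures coupling term $\tr(\bSigma_\bX+\bSigma_\bZ-2(\bSigma_\bX^{1/2}\bSigma_\bZ\bSigma_\bX^{1/2})^{1/2})$ jointly with the two trace terms and the operator-norm denominator, arguing via an eigenvalue-alignment/majorization argument (using that all relevant matrices can be taken co-diagonalizable at the optimum) that no anisotropic or misaligned choice does better. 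Once the problem is isotropic, writing $\bSigma_\bX=\sigma_X^2\bI$ and $\bSigma_\bZ=\sigma_Z^2\bI$ turns the Bures term into $d(\sigma_X-\sigma_Z)^2$ and the constraint into $\|\bmu_\bX-\bmu_\bZ\|^2+d(\sigma_X-\sigma_Z)^2\le\epsilon^2$; since the gain from $\sigma_Z>\sigma_X$ is discounted by $1/N$, the $N\gg1$ assumption makes the optimal choice $\sigma_Z=\sigma_X=\delta$ with the full budget $\|\bmu_\bX-\bmu_\bZ\|^2=\epsilon^2$ spent on the mean shift (the exact optimum $\sigma_Z=\tfrac{N}{N-1}\sigma_X$ differs only at order $d/N^2$). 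This yields the inner value $\frac{ds^2}{n}+(1-s)^2\!\left(\frac{d}{N}+\frac{\epsilon^2}{\delta^2}\right)$.

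Finally I would minimize this convex quadratic in $s$. Writing $a=d/n$ and $c=\frac{d}{N}+\frac{\epsilon^2}{\delta^2}$, the minimizer of $as^2+c(1-s)^2$ is $s=\frac{c}{a+c}$, which is exactly the claimed $s=\frac{\epsilon^2/\delta^2+d/N}{\epsilon^2/\delta^2+d/n+d/N}$, and the optimal value is $\frac{ac}{a+c}$. Substituting and clearing denominators by $nN$ and then by $\delta^2/\epsilon^2$ recovers $R_{op}^*=\frac{d(\frac{\delta^2}{\epsilon^2}d+N)}{\frac{\delta^2}{\epsilon^2}d(n+N)+nN}$, completing the proof.
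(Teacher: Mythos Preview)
Your proposal is correct and reaches the claimed $s$ and $R_{op}^*$, but it inverts the paper's order of reductions. The paper (Appendix~B, importing the argument of Theorem~\ref{cor: trace} and hence of Theorem~\ref{thm: main}) keeps $\bA$ general through the inner maximization: it aligns $\bmu_\bX-\bmu_\bZ$ with the top singular direction of $\bA-\bI$, uses the matrix-derivative step on $\tbSigz$ to get $\bSigma_\bZ\to\bSigma_\bX$ as $N\to\infty$, applies the H\"older-type bound $\tr(M\bSigma_\bX)\le|\bSigma_\bX|_{op}\,\tr(M)$ (tight for isotropic $\bSigma_\bX$) to reduce to
\[
\min_{\bA}\ \frac{\|\bA-\bI\|_{op}^2\epsilon^2}{\delta^2}+\tr\!\Big(\frac{\bA^T\bA}{n}+\frac{(\bI-\bA)^T(\bI-\bA)}{N}\Big),
\]
and only then diagonalizes $\bA$ via SVD with a short case analysis on the $s_i$. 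You instead collapse $\bA$ to $s\bI$ \emph{first} via convexity of the inner value plus Haar averaging over $O(d)$, which is cleaner and bypasses both the $\sigma_{\max}$ bookkeeping and the SVD case split. With $\bA=s\bI$ the inner problem is isotropic from the start, and the ``obstacle'' you flag (both covariances isotropic) is mild: commuting $\bSigma_\bX,\bSigma_\bZ$ minimize the Bures term for fixed spectra, the per-eigenvalue optimization in $\bSigma_\bZ$ gives $\bSigma_\bZ=(1-1/N)^{-2}\bSigma_\bX$, and then $\tr(\bSigma_\bX)/|\bSigma_\bX|_{op}\le d$ forces $\bSigma_\bX=\delta^2\bI$. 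Both routes lean on the $N\gg1$ asymptotic to collapse $\bSigma_\bZ$ onto $\bSigma_\bX$, so neither is more rigorous than the other on that point; your symmetry argument buys you a shorter path, while the paper's route makes the worst-case $\bSigma_\bX$ explicit for general $\bA$ before specializing.
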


The key insight from Theorems \ref{thm: main}, \ref{cor: trace} and \ref{cor: op} is that normalizing the objective using various matrix norms on $\bSigma_\bX$ does not alter the structure of the optimal estimator or the interactions between $n$, $N$, $\delta$, and $\epsilon$ in the optimal weighting constant, $s$, or the optimal objective value, $R^*$, except for introducing a different dependence on the dimension of the problem $d$.

In general, the weighting factor approaches $s \to 1$, implying the estimate becomes more sensitive to the true samples while disregarding the auxiliary samples, if $\epsilon^2 \gg \frac{\delta^2}{n}$. This condition suggests that the auxiliary distribution provides a poor approximation of the true distribution, and the mean estimation task is relatively easy using only the true samples. 

Let $R^*$ be the optimal rate in Theorem \ref{cor: trace}. The expression
\begin{equation}
    n R^* = \frac{\frac{\delta^2}{\epsilon^2}+N}{\frac{\delta^2}{\epsilon^2}(1+\frac{N}{n}) + N}
\end{equation}
is the ratio between the worst-case MSE of the robust estimator and the MSE of the sample mean from the true distribution. This ratio is small when the uncertainty in the true distribution (parametrized by $\delta^2$) is large compared to the square of the $\mathcal{W}_2$ radius $\epsilon$. In fact, $\frac{\delta^2}{\epsilon^2}$ should be on the order of $N$, the number of auxiliary samples. Further, $\frac{N}{n}$ should be large, indicating that the number of true samples is relatively limited.

\section{Proof of Theorem \ref{thm: main}} \label{sec:proof}
We provide a detailed proof of Theorem \ref{thm: main}. The proofs for Theorems \ref{cor: trace} and \ref{cor: op} require slight algebraic modifications and are excluded for brevity.


\begin{proof}
Note that \eqref{eq: obj_PQ} is unbounded unless $\bB = \bI - \bA$. Indeed, fix arbitrary $\bA,\ \bB, \ \bSigma_\bX,$ and $\bSigma_\bZ$ and take $\bmu_\bX = \bmu_\bZ$ align with the principal singular direction of $\bA + \bB  - \bI$ satisfying $\|\bmu_\bX\| \to \infty$ and such that \eqref{eq: W_2} is upper-bounded by $\epsilon$. We  have
\begin{equation}
   \|\bA \bmu_\bX + \bB \bmu_\bZ - \bmu_\bX\|^2 = \|(\bA + \bB - \bI)\bmu_\bX\|^2 \to \infty \\
\end{equation}
unless $\bA + \bB - \bI = 0$.

From the above remark, \eqref{eq: obj_PQ} takes the following form: 

\begin{align}\label{eq: obj_A}
& \min_{\bA} \max_{\mathcal{W}_2(\bbP,\bbQ) \le \epsilon, \|\bSigma_\bX\|_F \ge \delta^2}  \frac{\|(\bA - \bI)(\bmu_\bX - \bmu_\bZ)\|^2}{\|\bSigma_\bX\|_F} + \nonumber \\
& \frac{\frac{1}{n}\tr{(\bA^T\bA\bSigma_\bX)} +\frac{1}{N}\tr{((\bI - \bA )^T(\bI - \bA )\bSigma_\bZ)}}{\|\bSigma_\bX\|_F}.
\end{align}

Since $\|(\bA - \bI)(\bmu_\bX - \bmu_\bZ)\|^2$ is the only term in the objective that depends on the direction of $\bmu_\bX - \bmu_\bZ$ and $\mathcal{W}_2(\bbP, \bbQ)$ depends only on the norm of $\bmu_\bX - \bmu_\bZ$ as well, it is clear that $\bmu_\bX - \bmu_\bZ$ will align with the principal singular direction of $\bA$ and we obtain the following:

\begin{align}
& \min_{\bA} \max_{\mathcal{W}_2(\bbP,\bbQ) \le \epsilon, \|\bSigma_\bX\|_F \ge \delta^2}  \frac{\sigma_{\max}(\bA - \bI)^2\|\bmu_\bX - \bmu_\bZ\|^2}{\|\bSigma_\bX\|_F} + \nonumber \\
& \frac{\frac{1}{n}\tr{(\bA^T\bA\bSigma_\bX)} +\frac{1}{N}\tr{((\bI - \bA )^T(\bI - \bA )\bSigma_\bZ)}}{\|\bSigma_\bX\|_F}.
\end{align}

Hence, the constraint $\mathcal{W}_2(\bbP, \bbQ) = \epsilon$ is active and we arrive at:
\begin{align}\label{obj_sigma_max}
& \min_{\bA} \max_{\mathcal{W}_2(\bbP,\bbQ) \le \epsilon, \|\bSigma_\bX\|_F \ge \delta^2}  \nonumber  \\
&\frac{\sigma_{\max}(\bA - \bI)^2(\epsilon^2-\tr{(\bSigma_\bX + \bSigma_\bZ - 2 (\bSigma_\bX^{\frac{1}{2}}\bSigma_\bZ}\bSigma_\bX^{\frac{1}{2}})^{\frac{1}{2}}))}{\|\bSigma_\bX\|_F} + \nonumber \\
& \frac{\frac{1}{n}\tr{(\bA^T\bA\bSigma_\bX)} +\frac{1}{N}\tr{((\bI - \bA )^T(\bI - \bA )\bSigma_\bZ)}}{\|\bSigma_\bX\|_F}
\end{align}

Define $\tbSigz = (\bSigma_\bX^{\frac{1}{2}}\bSigma_\bZ\bSigma_\bX^{\frac{1}{2}})^{\frac{1}{2}}$, then $\bSigma_\bZ = \bSigma_\bX^{-\frac{1}{2}}\tbSigz^2\bSigma_\bX^{-\frac{1}{2}}$ and \eqref{obj_sigma_max} becomes,

\begin{equation}
    \begin{aligned}\label{obj_tSigz}
    & \min_{\bA} \max_{\mathcal{W}_2(\bbP,\bbQ) \le \epsilon, \|\bSigma_\bX\|_F \ge \delta^2}  \nonumber  \\
    &\frac{\sigma_{\max}(\bA - \bI)^2(\epsilon^2-\tr{(\bSigma_\bX + \bSigma_\bX^{-\frac{1}{2}}\tbSigz^2\bSigma_\bX^{-\frac{1}{2}} - 2 \tbSigz}))}{\|\bSigma_\bX\|_F} + \nonumber \\
    & \frac{\frac{1}{n}\tr{(\bA^T\bA\bSigma_\bX)} +\frac{1}{N}\tr{((\bI - \bA )^T(\bI - \bA )\bSigma_\bX^{-\frac{1}{2}}\tbSigz^2\bSigma_\bX^{-\frac{1}{2}})}}{\|\bSigma_\bX\|_F}
    \end{aligned}
\end{equation}

Let us take the matrix derivative by $\tbSigz$ and equate it to zero. Note that it does not take the constraint $\tbSigz \succ 0$ into account but as we will see the solution of the corresponding KKT conditions will turn out to be positive definite by itself:
\begin{equation}
    \begin{aligned}
    & \frac{2}{N}\bSigma_\bX^{-\frac{1}{2}}(\bI - \bA )^T(\bI - \bA )\bSigma_\bX^{-\frac{1}{2}}\tbSigz \\ 
    &- \sigma_{max}(\bA - \bI)^2(2\bSigma_\bX^{-1}\tbSigz - 2\bI)  = 0.
    \end{aligned}
\end{equation}

We recover $\tbSigz$ as
\begin{equation}
    \begin{aligned}
    & \tbSigz  = \sigma_{max}(\bA - \bI)^2 \cdot \Big(\bSigma_\bX^{-\frac{1}{2}}\big(\sigma_{max}(\bA - \bI)^2 \\
    &  - \frac{1}{N}(\bI - \bA )^T(\bI - \bA )\big)\bSigma_\bX^{-\frac{1}{2}}\Big)^{-1}
    \end{aligned}
\end{equation}

 We deduce that as $N \to \infty$, $\tbSigz $ uniformly converges to $\bSigma_\bX$ in the operator norm. Hence, for large enough $N$, $\tbSigz \approx \bSigma_\bX$ implying $\bSigma_\bZ \approx \bSigma_\bX$ and we obtain:
 

\begin{equation} \label{eq: before_holder}
\begin{aligned}
    \min_{\bA} \max_{\mathcal{W}_2(\bbP,\bbQ) \le \epsilon, \|\bSigma_\bX\|_F \ge \delta^2} & \frac{\sigma_{\max}(\bA - \bI)^2\epsilon^2 + \frac{1}{n}\tr{\big(\bA^T\bA\bSigma_\bX \big)}}{\|\bSigma_\bX\|_F} \\
    & + \frac{\frac{1}{N}\tr{((\bI - \bA )^T(\bI - \bA )\bSigma_\bX)}}{\|\bSigma_\bX\|_F}.
    \end{aligned}
\end{equation}

Hence, $\bSigma_\bX$ aligns with $\frac{\bA^T\bA}{n} + \frac{(\bI - \bA )^T(\bI - \bA )}{N}$, $\|\bSigma_\bX\|_F = \delta^2$ and

\begin{align}\label{eq: obj_A2}
& \min_{\bA} \frac{\sigma_{\max}(\bA - \bI)^2\epsilon^2}{\delta^2} + \|\frac{\bA^T\bA}{n} + \frac{(\bI - \bA )^T(\bI - \bA )}{N}\|_F.
\end{align}

Use the singular value decomposition $\bA = \bU \bS \bV$, where $\bU, \bV \in \mathbb{R}^d$ are unitary and $\bS = \diag(s_1,\dots,s_d)$ where $s_1 \ge s_2 \dots \ge s_d \ge 0$. The objective \eqref{eq: obj_A2} decomposes as 
\begin{align}\label{eq: obj_s_i}
& \min_{s_1 \ge s_2 \ge \dots \ge s_d \ge 0} \nonumber \\
& \frac{\max\{(s_1-1)^2, (s_d-1)^2\}\epsilon^2 }{\delta^2} + \sqrt{\sum_i \left(\frac{s_i^2}{n} + \frac{(1 - s_i )^2}{N}\right)^2}
\end{align}

Now note that for $d>i>1$ the optimization in $s_i$ is quadratic with a single constraint $s_1 \ge s_i \ge s_d$ and we have: 

$$s_i  = s := \begin{cases}
    & \frac{1}{1 + \frac{N}{n}} \text { if } s_1 \ge \frac{1}{1 + \frac{N}{n}}  \ge s_d \\
    & s_1 \text { if } s_1 < \frac{1}{1 + \frac{N}{n}}  \\
    & s_d \text { if } s_d > \frac{1}{1 + \frac{N}{n}}  \\
\end{cases}  $$  

Consider the cases above one by one:

$\bullet$ If $s_1 \ge \frac{1}{1 + \frac{N}{n}}  \ge s_d$, then $s_d = 1 + \frac{N}{n} = s $ as well because this choice of $s_d$ minimizes both $(1-s_d)^2$ and $\frac{s_d^2}{n} + \frac{(1 - s_d )^2}{N}$ under the given constraints and  $\max\{(s_1-1)^2, (s_d-1)^2\} = (s_d-1)^2$ because the optimal $s_d \le s_1 \le 1$.  Hence, \eqref{eq: obj_s_i} becomes:

\begin{align}
& \min_{s_1 \ge \frac{1}{1+\frac{N}{n}}} \frac{N^2\epsilon^2}{\delta^2(n+N)^2} + \sqrt{ \left(\frac{s_1^2}{n} + \frac{(1 - s_1 )^2}{N}\right)^2 + \frac{d-1}{(n+N)^2}}
\end{align}

Therefore, the optimal $s_1 = \frac{1}{1+\frac{N}{n}}$ as well, $\bA = \frac{1}{1+\frac{N}{n}}\bI$ and the value of the objective \eqref{eq: obj_s_i} equals 

\begin{align}\label{eq: first_obj}
\frac{N^2\epsilon^2}{(n+N)^2\delta^2} + \frac{\sqrt{d}}{n+N} = \frac{1}{n+N}\left(\frac{N^2\epsilon^2}{(n+N)\delta^2}+\sqrt{d}\right)
\end{align}

$\bullet$ If $s_d \ge \frac{1}{1+\frac{N}{n}}$, then as discussed earlier $s_2 = \dots = s_{d-1} =:s = s_d$, and \eqref{eq: obj_s_i} simplifies to 

\begin{align}
& \min_{s_1 \ge s \ge \frac{1}{1+\frac{N}{n}}} \frac{\max\{(s_1-1)^2, (s-1)^2\}\epsilon^2}{\delta^2} + \nonumber \\
& \sqrt{\left(\frac{s_1^2}{n} + \frac{(1 - s_1 )^2}{N}\right)^2 + (d-1)(\frac{s^2}{n} + \frac{(1 - s)^2}{N})^2}
\end{align}

Hence, $s_1 = s_d$ as well, because otherwise resetting $s_1 := s$ would decrease the first term and not change the second and we arrive at: 

\begin{align}\label{eq: second_case}
& \min_{s \ge \frac{1}{1+\frac{N}{n}}} \frac{(s-1)^2\epsilon^2}{\delta^2} + \sqrt{d}\left(\frac{s^2}{n} + \frac{(1 - s)^2}{N}\right)
\end{align}

$\bullet$ If $s_1 \le \frac{1}{1+\frac{N}{n}}$, similarly to the previous case we arrive at the following:

\begin{align}\label{eq: third_case}
& \min_{s \le \frac{1}{1+\frac{N}{n}}} \frac{(s-1)^2\epsilon^2}{\delta^2} + \sqrt{d}\left(\frac{s^2}{n} + \frac{(1 - s)^2}{N}\right)
\end{align}

Combining \eqref{eq: second_case} and \eqref{eq: third_case} altogether into one objective, we arrive at:

\begin{align}
& \min_{s} \frac{(s-1)^2\epsilon^2}{\delta^2} + \sqrt{d}\left(\frac{s^2}{n} + \frac{(1 - s)^2}{N}\right)
\end{align}

Hence, $s = \frac{\frac{\epsilon^2}{\delta^2}+\frac{\sqrt{d}}{N}}{\frac{\epsilon^2}{\delta^2}+\frac{\sqrt{d}}{N} + \frac{\sqrt{d}}{n}}$, $\bA = s\bI$ and the corresponding value of the objective is equal to 

\begin{align}\label{eq: second_obj}
    & \frac{(\frac{\epsilon^2}{\delta^2}+\frac{\sqrt{d}}{N})^2\frac{\sqrt{d}}{n} + (\frac{\epsilon^2}{\delta^2}+\frac{\sqrt{d}}{N})\frac{d}{n^2}}{(\frac{\epsilon^2}{\delta^2}+\frac{\sqrt{d}}{N} + \frac{\sqrt{d}}{n})^2} = \frac{(\frac{\epsilon^2}{\delta^2}+\frac{\sqrt{d}}{N})\frac{\sqrt{d}}{n}}{\frac{\epsilon^2}{\delta^2}+\frac{\sqrt{d}}{N} + \frac{\sqrt{d}}{n}} \nonumber \\
    & = \frac{(\frac{\epsilon^2N}{\delta^2}+\sqrt{d})\sqrt{d}}{\frac{\epsilon^2nN}{\delta^2}+\sqrt{d}(n+N)} = \frac{1}{n+N}\frac{(\frac{\epsilon^2N}{\delta^2}+\sqrt{d})\sqrt{d}}{\frac{\epsilon^2nN}{\delta^2(n+N)}+\sqrt{d}} 
\end{align}

\eqref{eq: second_obj} is upper-bounded by \eqref{eq: first_obj}. It suffices to show that 

\begin{align*}
    \frac{(\frac{\epsilon^2N}{\delta^2}+\sqrt{d})\sqrt{d}}{\frac{\epsilon^2nN}{\delta^2(n+N)}+\sqrt{d}} \le \frac{N^2\epsilon^2}{(n+N)\delta^2}+\sqrt{d}
\end{align*}



Simplifying the expression:
\begin{align*}
    \frac{\epsilon^2}{\delta^2}\frac{N^2}{n+N}\frac{\sqrt{d}}{\frac{\epsilon^2nN}{\delta^2(n+N)}+\sqrt{d}} \le \frac{N^2\epsilon^2}{(n+N)\delta^2},
\end{align*}
implying that \eqref{eq: second_obj} is the optimal solution to \eqref{eq: obj_A}. 
\end{proof}

\section{Numerical Experiments} \label{sec:sim}

To showcase our theoretical predictions, we conduct a numerical experiment as follows. We take $\bbP \sim \mathcal{N}(\mu_{\bbP}, \sigma\bI_d)$ and $\bbQ \sim \mathcal{N}(\mu_{\bbQ}, \sigma\bI_d)$ to be two $d$-dimensional  Gaussian distributions with different means and the same scalar covariance satisfying $\mathcal{W}_2(\bbP, \bbQ)^2 = \|\mu_{\bbP} - \mu_{\bbQ}\|^2 = \epsilon^2$ and $\|\bSigma_\bX\|_F = \delta^2$ and fix $\epsilon = \delta = 1$. We take the dimension $d = 200$ and generate $n = 20$ samples $\bx_1,\dots, \bx_n$ from $\bbP$ and $N = 1000$ samples $\bz_1,\dots,\bz_N$ from $\bbQ$. We then compare the average performances of three predictors for the "true" mean $\bbP$: averaging over the "true" data $\frac{\bx_1 + \dots + \bx_n}{n}$ plotted in green, averaging over all data $\frac{\bx_1 + \dots + \bx_n + \bz_1 + \dots + \bz_N}{n+N}$ plotted in blue, and the optimal weighted average from Theorem \ref{thm: main} plotted in red. As expected, the optimal estimator achieves lower error compared to both naive averaging of all the samples or averaging of only the samples from the true distribution. When the $\mathcal{W}_2$ is small, the optimal estimator essentially averages all the samples. As the $\mathcal{W}_2$ distance, $\epsilon$ grows large, the optimal error approaches the error of the sample average from the true distribution.

\begin{figure}
    \centering
    \includegraphics[width=8cm]{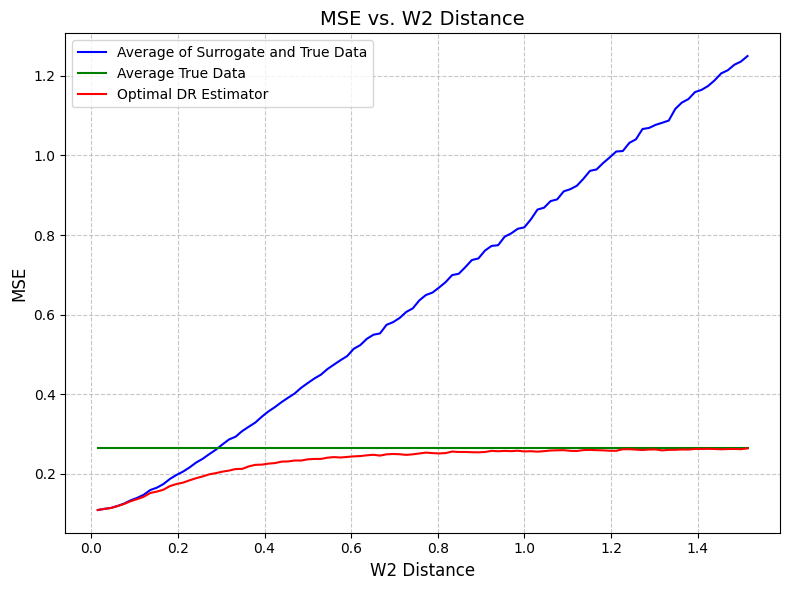}
    \caption{MSE of optimal estimator from Theorem \ref{thm: main}.}
    \label{fig:optest}
\end{figure}

\section{Conclusion}

We suggest a new framework for mean estimation using samples from both true and auxiliary distributions and find the worst-case optimal linear mean estimator. We show that incorporating auxiliary data for estimation always improves performance compared to just averaging over the true distribution. However, the improvement is marginal unless one of the following conditions holds: the auxiliary data distribution matches the original one closely, the original data is very noisy or there is a very limited number of samples from the original distribution.  

Potential directions for future work include extending the analyses to other statistical distances, such as $W_p$-distances for $p \ne 2$ and the KL-divergence, and exploring potential practical applications of the introduced framework.





\balance
\bibliographystyle{IEEEtran}
\bibliography{main}









\appendices

\section{Proof of Theorem \ref{cor: trace}}
All steps in the proof of Theorem \ref{thm: main} up to equation \eqref{eq: before_holder} remain unchanged modulo a different denominator and a different constraint on $\bSigma_\bX$, and we arrive at:

\begin{align}\label{eq: trace_ratio}
& \min_{\bA} \max_{\mathcal{W}_2(\bbP,\bbQ) \le \epsilon, \tr{(\bSigma_\bX)} \ge \delta^2}  \nonumber  \\
& \frac{\sigma_{\max}(\bA - \bI)^2\epsilon^2 + \frac{1}{n}\tr{(\bA^T\bA\bSigma_\bX)} +\frac{1}{N}\tr{((\bI - \bA )^T(\bI - \bA )\bSigma_\bX)}}{\tr{(\bSigma_\bX)}}
\end{align}

Note that at the optimal point $\tr{(\bSigma_\bX)} = \delta^2$ because the second term of \eqref{eq: trace_ratio}, namely the following expression, is invariant to rescalings of $\tr{(\bSigma_\bX)}$:
$$\frac{\frac{1}{n}\tr{(\bA^T\bA\bSigma_\bX)} +\frac{1}{N}\tr{((\bI - \bA )^T(\bI - \bA )\bSigma_\bX)}}{\tr{(\bSigma_\bX)}}$$

And the first term of \eqref{eq: trace_ratio}, namely the ratio below, increases when $\tr{(\bSigma_\bX)}$ decreases: 
\begin{align*}
\frac{\sigma_{\max}(\bA - \bI)^2\epsilon^2}{\tr{(\bSigma_\bX)}}
\end{align*}

Now, applying Holder's inequality, we obtain:

\begin{align*}
& \frac{\frac{1}{n}\tr{(\bA^T\bA\bSigma_\bX)} +\frac{1}{N}\tr{((\bI - \bA )^T(\bI - \bA )\bSigma_\bX)}}{\tr{(\bSigma_\bX)}} \\
& \le \|\frac{\bA^T\bA}{n}+\frac{(\bI - \bA )^T(\bI - \bA )\bSigma_\bX}{N}\|_{op}
\end{align*}

As such, the entire objective is upper-bounded by 

\begin{align*}
\frac{\sigma_{\max}(\bA - \bI)^2\epsilon^2}{\delta^2} + \|\frac{\bA^T\bA}{n}+\frac{(\bI - \bA )^T(\bI - \bA )}{N}\|_{op}
\end{align*}

Moreover, the equality is attained when $\bSigma_\bX$ is rank one and is supported on the principal direction of 
$\frac{\bA^T\bA}{n}+\frac{(\bI - \bA )^T(\bI - \bA )}{N}$. 

As such, we arrive at: 

\begin{align*}
\min_{\bA} \frac{\|\bA - \bI\|_{op}^2\epsilon^2}{\delta^2} + \|\frac{\bA^T\bA}{n}+\frac{(\bI - \bA )^T(\bI - \bA )}{N}\|_{op}
\end{align*}

Taking the SVD and diagonalizing the objective again similar to the proof of Theorem \ref{thm: main}, we arrive at

\begin{align*}
\min_{s_1 \ge \dots \ge s_d \ge 0} \max_{i=1,\dots,d} \frac{(s_i-1)^2\epsilon^2}{\delta^2} + \max_{i=1,\dots,d} \left(\frac{s_i^2}{n}+\frac{(1-s_i)^2}{N}\right)
\end{align*}

Hence, the minimum is achieved when $s_1 = \dots = s_d$ and the objective turns into: 

\begin{align*}
\min_{s_1 \ge \dots \ge s_d \ge 0} \max_{i=1,\dots,d} \frac{(s_i-1)^2\epsilon^2}{\delta^2} + \max_{i=1,\dots,d} \left(\frac{s_i^2}{n}+\frac{(1-s_i)^2}{N}\right)
\end{align*}

\begin{align*}
\min_{s \ge 0} \frac{(s-1)^2\epsilon^2}{\delta^2} + \left(\frac{s^2}{n}+\frac{(1-s)^2}{N}\right)
\end{align*}

Hence,

$$\bA = s\bI \text { and } s = \frac{\frac{\epsilon^2}{\delta^2} + \frac{1}{N}}{\frac{\epsilon^2}{\delta^2} + \frac{1}{n}+ \frac{1}{N}}$$

\section{Proof of Theorem \ref{cor: op}}

Similarly to the proof of Theorem \ref{cor: trace}, we arrive at:

\begin{align*}
\min_{\bA} \frac{\|\bA - \bI\|_{op}^2\epsilon^2}{\delta^2} + \tr{\left(\frac{\bA^T\bA}{n}+\frac{(\bI - \bA )^T(\bI - \bA )}{N}\right)}
\end{align*}

Taking the SVD:

\begin{align*}
\min_{s_1 \ge \dots \ge s_d \ge 0} \left(\max_{i=1,\dots,d} \frac{(s_i-1)^2\epsilon^2}{\delta^2}\right) + \sum_i \left(\frac{s_i^2}{n}+\frac{(1-s_i)^2}{N}\right)
\end{align*}

Denote $s_t = \argmax (s_i-1)^2$. WLOG assume $s_t \le 1$. Note that either $s_t \le \frac{1}{1 + \frac{N}{n}}$ and for any $i \ne t$ we have $s_i = \frac{1}{1 + \frac{N}{n}}$ , and then $s_t =  \frac{1}{1 + \frac{N}{n}}$ as well, or  $s_t > \frac{1}{1 + \frac{N}{n}}$ and $s_i = s_t$ for all $i$. In either case, all $s_i$ are equal to each other and denoting $s:=s_i$ we arrive at:

\begin{align*}
\min_{s > 0} \frac{(s-1)^2\epsilon^2}{\delta^2}+ d\left(\frac{s_i^2}{n}+\frac{(1-s_i)^2}{N}\right)
\end{align*}

Hence, $\bA = s\bI \text { where }  s = \frac{\frac{\epsilon^2}{\delta^2} + \frac{d}{N}}{\frac{\epsilon^2}{\delta^2} + \frac{d}{n} +\frac{d}{N}}$

\end{document}